\newcommand{\R}{\mathbb R}
\newcommand{\vm}{\vec{m}}
\newcommand{\vp}{\vec{p}}
\newcommand{\vzeta}{\vec{\zeta}}
\newcommand{\vphi}{\vec{\phi}}
\newcommand{\vmu}{\vec{\mu}}
\newcommand{\vnu}{\vec{\nu}}
\newcommand{\vpsi}{\vec{\psi}}
\newcommand{\vA}{\vec{\cal A}}
\newcommand{\cM}{\mathbb{M}}
\newtheorem{theorem}{Theorem}
\newtheorem{proposition}{Proposition}
\newtheorem{defi}{Definition}
\newtheorem{acknowledgment*}{Acknowledgment}
\newtheorem{prob}{Problem}
\newcommand{\be}{\begin{equation}}
\newcommand{\ee}{\end{equation}}
\begin{document}
		\title{On semi-discrete sub-partitions of vector-valued measures}
	\author{Shlomi Gover, Gershon Wolansky \\ Department of Mathematics, Technion, Haifa 32000, Israel}
	%\footnote{Department of Mathematics, Technion, Haifa 32000, Israel}
	%\begin{flushright}
	%	Application No. \hspace{8mm} 1225/19  \\[3mm]
	%	PI Name: Gershon Wolansky
	%\end{flushright}
	%\vskip 8mm	
	
	\maketitle
	%\maketitle
	\begin{center} Abstract \end{center}
	We introduce a concept of optimal transport for vector-valued measures and its dual formulation. In this note we concentrate  on the semi-discrete case and show some fundamental differences between the scalar and vector cases. A manifestation of this difference is the possibility of  non-existence of optimal solution for the dual problem for  feasible primer problems. 
\section{Introduction}
There are $n$  agents sharing a  cake of $q$ layers (possibly of different widths). Each agent demands a given amount of each of the components. Assume the demands are all feasible (i.e. the sum of the demands for each component by all agents does not exceed the  total amount of this component).  Can we split the cake {\em vertically} (i.e.without separating the layers) such that each agent will get his precise demand? If such a sub-partition is possible, in how many ways can it be done?

Assume further that the cost of production and delivery of each part of the cake depends on the consuming agent
(say, some agents demand a better quality and/or the the cost  of delivery for some agents is higher than for others). Can the market determine an equilibrium price that the baker may charge  for each component for each agent, in a way that will supply  the agents' demand?

In the semi-discrete setting we adopt, the number of agents and layers are finite but the cake itself  is a continuum.
If the number of components (layers) of the cake is one then this is a special case of optimal transport (Monge-Kantorovich)  theory (see, e.g. \cite{SntA, Vil1, Vil2}).  In optimal transport both the cake and    agents can be  continuum, but there is only a single layer. In the single-layer semi discrete setting  the first question is trivial  since any feasible demand can always be satisfied, and, in general, there are   infinity many ways to do it.  It is less trivial but still true that  an equilibrium  price can also be determined for each agent in that case.

The problem is much less trivial in the multi-layer case. In this note we  characterize the demands that can be supplied and the pricing strategy in this case, and  touch on some other issues related to multi-layer optimal transport

\section{The single layer}
%Let us elaborate the single layer case $q=1$ in some more details.
 
 Let $(X,d,\mu)$ be a compact metric measure space (the "cake") and ${\cal B}$ the Borel sigma-algebra over $X$. The Radon measure $\mu:{\cal B}\rightarrow \R_+$ is assumed to  contain no atoms, in particular $0<\mu(X)<\infty$ and for any $m\in(0, \mu(X))$ there are sets $A, A^{'}\in{\cal B}$  (in fact, infinitely many such sets) for which 
 $\mu(A)=\mu(A^{'})=m$ and $\mu(A\Delta A^{'})>0$  ($\Delta$ stands for the symmetric difference). A {\em sub-partition} of $X$ corresponding to $\vm=(m_1,\ldots m_n)\in\R^n_+$ (the "demands")  is composed of $n$ essential disjoint sets $A_1, \ldots A_n\in {\cal B}$ such that 
 $$ \mu(A_i)=m_i \ \ \text{and} \ \ \mu(A_i\cap A_k)=0\ \ \text{for} \ i\not= k \ . $$
  We identify two sub-partitions $\vec{A}:=(A_1\ldots A_n)$, $\vec{A}^{'}:=(A^{'}_1\ldots A^{'}_n)$ if $\mu(A_i\Delta A^{'}_i)=0$ for $i=1\ldots n$.
 Determine the set of all these sub-partitions by $\vA(\mu,\vm)$. 
 Evidently, there exists infinity many   sub-partitions in $\vA(\mu,\vm)$ if $|\vm|:= \sum_1^n m_i \leq \mu(X)$ and non if $|\vm|>\mu(X)$.
 
 Let $\vec{c}:= (c_1\ldots c_n):X\rightarrow \R^n_+$ be a continuous cost function. A {\em stable sub-partition} is a sub-partition $\vec{A}:=(A_1, \ldots A_n)\in \vA(\mu,\vm)$ 
 which minimizes 
 \be\label{sd} Mo(\vec{A}):=\sum_{i=1}^n\int_{A_i}c_id\mu\ee
 over all $\vec{A}\in \vA(\mu,\vm)$. 
 
 The components  $c_i(x)$  of $\vec{c}$ at $x\in X$ are the "cost of production" of $x$ for $i$.  Let $p_i\in\R$ be the price of   purchase per unit of agent $i$ \cite{Gal}. The net profit of the "baker" 
for selling $x$ to agent $i$ is $p_i-c_i(x)$. It is assumed that the baker will not sell $x$ to $i$ unless both conditions hold:  The profit is positive and he cannot increase it  by selling it to another agent. Thus, a necessary condition for $x$ to be sold to agent $i$ is 
\be\label{phidef}p_i-c_i(x)= \max\{ 0, p_k-c_k(x); k=\{1\ldots n\}\}:= -\phi(x, \vp) \ . \ee
 The price vector $\vp=(p_1\ldots p_n)$ is called an equilibrium price for the sub-partition $\vec{A}=(A_1, \ldots A_n)$ if 
 $$A_i\subseteq \{x; p_i-c_i(x)=-\phi(x,\vp)\}$$
 for any $i\in\{1\ldots n\}$.  It can easily be verified that 
 	a sub-partition $\vec{A}\in\vA(\mu, \vm)$ is stable for given cost if  there exists an equilibrium price vector for this sub-partition. 

 The semi-discrete case is a special case of the celebrated {\em optimal transport problem}. In general, the problem of optimal transport (Monge-Kantorovich) refers to a pair of measure  spaces  $(X,\mu), (Y,\nu)$ where 
 $\mu(X)=\nu(Y)$, and $c:X\times Y\rightarrow \R$ is a given cost function. The Monge problem \cite{Mo} is the minimization of the functional
 $$ Mo(T)=\int_Xc(x, T(x))\mu(dx)$$
 over all measurable $T:X\rightarrow Y$ which satisfies the condition $T_\#\mu=\nu$, namely
 $\mu(T^{-1}(B))=\nu(B)$ for any measurable set $B\subset Y$. 
 The Kantorovich relaxation reduces the Monge problem to infinite-dimensional linear programming:
 \paragraph{Kantorovich problem:} \cite{Ka} Minimize 
\be\label{Kanto} K(\pi):=\int_X\int_Y c(x,y)\pi(dxdy)\ee
 over  $\pi\in\Pi(\mu,\nu)$ 
 where  
 \be\label{Pidef} \Pi(\mu,\nu):=\left\{ \pi\  \text{is a positive measure on} \ \ X\times Y, \ \ 
 \int_X\pi(dxdy)= \nu(dy), \ \ \int_Y \pi(dxdy)=\mu(dx) \right\} \ee
 Under mild  conditions  the minimizer of the Kantorovich problem always exists, and \cite{BK, Partelli} $$\min_{\pi\in\Pi(\mu,\nu)} K(\pi)=\inf_{T_\#\mu=\nu}Mo(T)\ . $$
 
 The problem of equilibrium prices is related to the Kantorovich duality Theorem. 
 
 \par\noindent{\bf Kantorovich Duality Theorem}\cite{Rec1}: {\it 
 \be\label{duali}\min_{\pi\in\Pi(\mu,\nu)} K(\pi)=\sup_{(\phi,\psi)\in J(c)}\int_X\phi d\mu+\int_Y\psi d\nu\ee
 where 	
 \be\label{duali1} J(c):= \left\{ \phi\in C(X), \psi\in C(Y); \psi(x)+\phi(y)\leq c(x,y)\ \ \forall (x,y)\in X\times Y  \right\} \ . \ee
 }

 In the semi discrete case (\ref{sd}) $Y$ is a finite set ($Y=\{0,1, \ldots n\}$), 
 $\nu(\{i\})=m_i$, $i=1\ldots n$, $\nu(\{0\})=\mu(X)-|\vm|$ and $c(x,i)=c_i(x)$, $c_0(x)\equiv 0$. An optimal sub-partition corresponds to  the solution of the Monge problem $T$ via $A_i=T^{-1}(\{i\})$ where $i\in \{1\ldots n\}$. 
 \par The existence of such an optimal mapping $T$ is not evident in the general case, but there always exists an optimal sub-partition in the semi-discrete case \cite{wol1}.  The equilibrium price for the semi-discrete case corresponds to the optimal function $\psi$ in the dual problem, which, in this case ($Y=\{0,1\ldots n\}$), is just a "price vector"  $\vp=(p_1, \ldots p_n)\in\R^n$ where $p_0$ (price of the "null agent") is set to zero.  Thus $\int_Y\psi d\nu\equiv \vp\cdot\vm$.  The optimal function  $\phi$ maximizing the right side of (\ref{duali}) for a given $\psi= \vp$ subject to the constraint (\ref{duali1}) is given by (\ref{phidef}).  In the semi-discrete case the Kantorovich duality Theorem takes the form
 \be\label{dualsd} \inf_{\vec{A}\in\vA(\mu,\vm)} Mo(\vec{A})=\sup_{\vp\in \R^n} \vp\cdot\vm - \int_X\phi(x,\vp) \mu(dx) \ . \ee
 
 {\bf Theorem}
 \cite{wol,wol1}: 
 {\it 	If $|\vm|\leq \mu(X)$ and $\mu$ is non-atomic then there exists a stable sub-partition $\vec{A}\in\vA(\mu,\vm)$ minimizing the left side of (\ref{dualsd}), and an equilibrium price vector $\vp$ for this sub-partition maximizing the right side of (\ref{dualsd}). If, moreover, 
 	$$ \mu(x\in X; c_i(x)-c_k(x)=\lambda)=0$$
 	for any $i\not=k\in\{1\ldots n\}$ and any $\lambda\in\R$, then there exists a {\em unique} stable sub-partition for each $\vm$ in the simplex $|\vm|\leq \mu(X)$.  }

 \section{Semi discrete optimal transport in the Multi-layer case}
 Let $\vec{\mu}:=(\mu^1\ldots 
\mu^q)$ be a vector of non-negative Radon measures on ${\cal B}$ (the layers), such that $\mu=\sum_1^q\mu^j$ is  atoms-free. 
%Assume farther that all components of $\vec{\mu}$ are non-atomic, i.e $\mu^j(\{x\})=0$ for any $x\in X$ and $j\in\{1\ldots q\}$. The first question we address is:
\begin{prob}Given $n\times q$ matrix ${\cal M}=\{m_i^j\}$
such that  $m_i^j\geq 0$. Is there a sub-partition of $X$ into $n$ pairwise essentially disjoint sets $A_i\in {\cal B}$  such  that $\mu^j(A_i)=m_i^j$?
	\end{prob}

Evidently, a necessary condition is the feasibility 
 \be\label{nescond}\sum_{i=1}^n m_i^j\leq \mu^j(X)\ \ \ \forall j\in \{1\ldots q\} \ . \ee
   Let $\cM(n, \vmu)$  be the set of all  $n\times q$ matrices ${\cal M}$ for which such a sub-partition exists.
   
   Note that the case $q=1$ is trivial, as $\cM(n,\vmu)$ is just the simplex of vectors  $\vm\in\R^n_+$, $\{|\vm|\leq \mu(X)\}$  and (\ref{nescond}) is also a sufficient condition for ${\cal M}$  to be in $\cM(n,\vmu)$. 
\begin{prob}\label{probunique}
	Is there ${\cal M}\in \cM(n,\vmu)$ for which there is a unique sub-partition $A_1, \ldots A_n\in {\cal B}$ such that $\mu^j(A_i)=m_i^j$? 
\end{prob}
 Let  $\zeta_j:= d\mu^j/d\mu$ be the relative density of $\mu^j$ (in particular $\sum_1^q\zeta_j(x)=1$ $\mu$-a.s). Let  $\vec{\zeta}:= (\zeta_1, \ldots \zeta_q)$. Let $c_i(x)$ be continuous functions on $X$ (the "cost of production" of $x$ for agent  $i\in\{1\ldots n\}$). 
 %For convenience of notation we also define $c_0(x)\equiv 0$.
  Let the following "generic" assumptions
 \begin{description}
 	\item[i] $\zeta_j$ are continuous on $X$ for any $j\in\{1\ldots q\}$.
 	\item[ii] For any $\vec\lambda\in\R^q$, $\vec{\lambda}\not= 0$, $\mu(x\in X; \vec\lambda\cdot\vec{\zeta}(x)=0)=0 $. 
 	\item[iii] For any $\vec\lambda\in\R^q$ and any $i\not=k\in\{1\ldots n\}$,  $\mu(x\in X; \vec\lambda\cdot\vec{\zeta}(x)=c_i(x)-c_k(x))=0 $
 \end{description}
 Let ${\cal P}:=\{p_i^j\}$ be the price of component $j$ charged from agent $i$. The net income associated with   $x\in X$ out of agent $i$ after deducing the cost is, then $\sum_{j=1}^q p_i^j\zeta_j(x) -c_i(x)$. This  determines a sub-partition by  assigning  $x\in X$ to the agent $i$ which maximizes the net income for the "baker": 
\be\label{Ap} A_i({\cal P}):= \left\{ x\in X; \sum_{j=1}^q p_i^j\zeta_j(x) -c_i(x)= \phi(x;{\cal P}) \right\}\ee
where 
$$\phi(x;{\cal P}):= \left[\max_{k\in\{1\ldots n\}}\sum_{j=1}^q p_k^j\zeta_j(x) -c_k(x)\right]_+ \ . $$
%Here $[\cdot]_+$ stands for the positive part 
By Assumption [{\bf iii}] above we obtain that, indeed, $\{A_i({\cal P})\}_{i=1\ldots n}$ is a sub-partition satisfying
$$ \mu\left(A_i({\cal P})\cap A_k({\cal P})\right)=0 \ \ \text{for} \ i\not= k \ \text{and} \ \cup_{i=1}^n A_i({\cal P})\subset X $$
where the last inclusion can be strict. 
\begin{defi}
The price matrix ${\cal P}$ is an equilibrium price for the demand ${\cal M}:=\{m_i^j\}\in \cM(n,\vmu)$ if the corresponding sub-partition satisfies the demand, namely 
$\mu^j(A_i({\cal P}))=m_i^j$ for each  $i\in \{1\ldots n\}$ and $j\in\{1\ldots q\}$. 
 \end{defi}
The next question we ask is
\begin{prob}\label{probprice}
Let ${\cal M}\in \cM(n,\vmu)$ and a continuous cost $\vec{c}=(c_1\ldots c_n)$ on $X$ satisfying condition {\em [{\bf iii}]}. 	Is there always an equilibrium price ${\cal P}$?
\end{prob}
It turns that the answer to Problem \ref{probprice} is related to Problem \ref{probunique}. 

In the case of a single  layer ($q=1$)  and  at least two agents  then the answer is (trivially) No to Problem \ref{probunique}  and Yes to Problem \ref{probprice}. For $q>1$ we will show that the answer to Problem \ref{probunique} is (surprisingly) Yes,  which, in turn, implies an example of non existence of equilibrium price (that is No for Problem \ref{probprice}). 
 \section{Main results}
   The first result follows from Lyapunov's theorem for vector measures \cite{Lyap} (see also \cite{Lyapel, LW})
   \par\noindent
   {\bf Theorem}[Lyapunov]:  
   	{\it The set $\{\vmu(E); \ \ E\in {\cal B}\}$ is convex in $\R^q$.}
 
An immediate characterization of $\cM(n,\vmu)$ follows from Lyapunov's theorem:
 	\begin{proposition}
 		Let $P_i$, $i=1,\ldots n$  be the projection of the $n\times q$ matrices on the $i$ row. Then $P_i\cM(n,\vmu)$ is a convex subset in $\R_+^q$.
 	\end{proposition}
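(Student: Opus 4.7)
The plan is to show that the projection $P_i\cM(n,\vmu)$ coincides exactly with the range $R(\vmu):=\{\vmu(E):E\in{\cal B}\}$ of the vector measure $\vmu$, after which Lyapunov's theorem (quoted just above in the excerpt) delivers convexity directly.

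First I would verify $P_i\cM(n,\vmu)\subseteq R(\vmu)$. If $\cM=\{m_k^j\}\in\cM(n,\vmu)$, then by definition there exists a sub-partition $(A_1,\ldots,A_n)$ with $\mu^j(A_k)=m_k^j$ for all $k,j$. In particular, the $i$-th row satisfies $(m_i^1,\ldots,m_i^q)=\vmu(A_i)\in R(\vmu)$.

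For the reverse inclusion $R(\vmu)\subseteq P_i\cM(n,\vmu)$, given any $\vec{r}=\vmu(E)$ with $E\in{\cal B}$, I would take $A_i:=E$ and $A_k:=\emptyset$ for $k\neq i$. These sets are pairwise essentially disjoint, so the $n\times q$ matrix whose $i$-th row equals $\vec{r}$ and whose other rows are zero lies in $\cM(n,\vmu)$, and its image under $P_i$ is $\vec{r}$. Lyapunov's theorem then says that $R(\vmu)$ is a convex subset of $\R^q$, and since each $\mu^j$ is nonnegative, $R(\vmu)\subseteq\R_+^q$. Therefore $P_i\cM(n,\vmu)=R(\vmu)$ is convex in $\R_+^q$.

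I do not anticipate any real obstacle here; the only point worth flagging is that the feasibility conditions on the \emph{other} rows of $\cM$ impose no restriction on the $i$-th row projection, because the remaining $n-1$ agents can always be assigned zero demand via empty sets. This is what reduces the problem to a pure statement about the range of $\vmu$, where Lyapunov applies verbatim.
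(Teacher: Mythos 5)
Your proof is correct and matches the paper's intended argument: the paper states the proposition as an immediate consequence of Lyapunov's theorem, and your identification $P_i\cM(n,\vmu)=\{\vmu(E):E\in{\cal B}\}$ (via taking the other $n-1$ sets empty) is exactly the reduction that makes Lyapunov apply. Nothing further is needed.
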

 There is, however, a stronger result:
 \par\noindent 
 {\bf Theorem} \ \cite{wolar}: \ {\it 
 	The set $\cM(n,\vmu)$ is convex in the space of $n\times q$ matrices.}

 We now address  Problem \ref{probunique}. 

Let ${\cal P}:=\{\vec{p}_1, \ldots \vec{p}_n\}$, $\vec{p}_i\in\R^q$ for $i=1\ldots n$ be an $\R$-valued  $n\times q$ matrix. For each such matrix define the $n-$sub-partition $ \vA^0({\cal P})$ corresponding to (\ref{Ap}) above where $\vec{c}=0$, namely
$$ A^0_i({\cal P}):= \left\{x\in X; \vec{p}_i\cdot\vec{\zeta}=\left[\max_{k=1\ldots q} \vec{p}_k\cdot\vec{\zeta}(x)\right]_+\right\} \ \ , i=1\ldots n \ . $$
It follows from assumptions [{\bf i,ii}] that  $\mu\left(A^0_i({\cal P})\cap A^0_k({\cal P})\right)=0$ if $\vp_i\not=\vp_k$. 
In particular, $A^0_1({\cal P}), \ldots A^0_n({\cal P})$ is an essentially disjoint sub-partition if 
\be\label{ass1}\vp_i\not=\vp_k \ \text{for \  any} \  1\leq i\not= k\leq n\ . \ee
Let ${\cal M}({\cal P}):= \{m_i^j({\cal P})\}=\{\mu^j(A_i^0({\cal P}))\}$. 

	Answer to problems \ref{probunique} and \ref{probprice}:
\begin{theorem}
	\label{noexist}
Assume {\bf i,ii}. There exists ${\cal M}\in\cM(n, \vmu)$  for which there is a unique sub-partition. For any such ${\cal M}$ there exists $\vec{c}\in C(X,\R^n)$ satisfying {\bf iii}  for which there is no equilibrium price. 
\end{theorem}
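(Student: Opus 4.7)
The plan is to address the two claims in sequence. For the first (existence of $\mathcal{M}$ with unique sub-partition): take any $\mathcal{P}=(\vec{p}_1,\dots,\vec{p}_n)$ with pairwise distinct non-zero rows, and set $\mathcal{M}:=\mathcal{M}(\mathcal{P})=\{\mu^j(A^0_i(\mathcal{P}))\}$. The key observation is a ``dual maximizer'' argument: the linear functional
\[
F(\{B_i\}):=\sum_{i=1}^n\int_{B_i}\vec{p}_i\cdot\vec{\zeta}\,d\mu=\sum_{i,j}p_i^j\mu^j(B_i)
\]
takes the same value $\sum_{i,j}p_i^j m_i^j$ on every sub-partition with marginals $\mathcal{M}$, while simultaneously it is pointwise-maximized by selecting at each $x$ the index achieving the largest $(\vec{p}_i\cdot\vec{\zeta}(x))_+$. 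Assumption [\textbf{ii}], applied both to $\vec{\lambda}=\vec{p}_i-\vec{p}_k$ and to $\vec{\lambda}=\vec{p}_i$, makes this argmax $\mu$-a.e.\ unique, and identifies it with $\{A^0_i(\mathcal{P})\}$. Any rival sub-partition with the same marginals but differing on a positive-measure set would strictly lower $F$, contradicting the equality, so the sub-partition is unique.

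For the second claim, fix such an $\mathcal{M}$ and select two cells $A^0_{i^*}$ and $A^0_{k^*}$ both of positive $\mu$-measure and sharing a common boundary (trivial for $n=2$; otherwise pick any adjacent pair from the arrangement). Set $s(x):=(\vec{p}_{i^*}-\vec{p}_{k^*})\cdot\vec{\zeta}(x)$, so $s\ge 0$ on $A^0_{i^*}$, $s\le 0$ on $A^0_{k^*}$, and $s=0$ on their interface. Define the cost by
\[
c_{i^*}(x):=C-\sqrt{|s(x)|},\qquad c_{k^*}\equiv 0,
\]
with $C$ large enough that $c_{i^*}\ge 0$, and choose the remaining $c_l$ as generic continuous positive functions making all pairwise differences satisfy Assumption [\textbf{iii}]. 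If an equilibrium $\mathcal{P}'=\{\vec{p}_i'\}$ existed, uniqueness from the first claim would force $A_i(\mathcal{P}',\vec{c})=A^0_i$ a.e. Setting $L(x):=(\vec{p}_{i^*}'-\vec{p}_{k^*}')\cdot\vec{\zeta}(x)$, the equilibrium inequalities yield $L\ge C-\sqrt{|s|}$ on $A^0_{i^*}$ and $L\le C-\sqrt{|s|}$ on $A^0_{k^*}$. Along a curve into $A^0_{k^*}$ from the interface with $s\approx-\beta t$ ($\beta>0$), the left side is $L=C+\alpha t+O(t^2)$ for some finite $\alpha$ (by smoothness of $L$ in $\vec{\zeta}$ and continuity of $\vec{\zeta}$), while the right side is $C-\sqrt{\beta t}$. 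The required inequality $\alpha t+\sqrt{\beta t}\le 0$ then forces $\alpha\le -\sqrt{\beta/t}\to-\infty$ as $t\to 0^+$, a contradiction: no linear-in-$\vec{\zeta}$ function can keep pace with the infinite-slope cusp.

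The main obstacle is verifying Assumption [\textbf{iii}] for the cusp cost: one must show $\mu\bigl(\{x:C-\sqrt{|s(x)|}=\vec{\lambda}\cdot\vec{\zeta}(x)\}\bigr)=0$ for every $\vec{\lambda}\in\mathbb{R}^q$. Squaring turns this into a quadratic polynomial identity in $\vec{\zeta}$, hence a codimension-one algebraic subvariety of $\vec{\zeta}$-space; mild regularity of $\vec{\zeta}_\#\mu$ (inherited from, or a gentle strengthening of, Assumption [\textbf{ii}]) assigns it measure zero. A secondary subtlety is compatibility with the ``null-agent'' constraint $\vec{p}_i'\cdot\vec{\zeta}-c_i\ge 0$ on $A^0_i$ and with Assumption [\textbf{iii}] for pairs involving indices outside $\{i^*,k^*\}$; both decouple from the cusp obstruction and are arranged by a judicious choice of the remaining $c_l$ and an overall constant shift in $\mathcal{P}'$.
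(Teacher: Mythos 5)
Your first half (uniqueness of the sub-partition for $\mathcal{M}=\mathcal{M}(\mathcal{P})$) is essentially the paper's argument: the linear functional $\sum_i\int_{B_i}\vec{p}_i\cdot\vec{\zeta}\,d\mu$ is constant on all sub-partitions with the prescribed marginals and is pointwise maximized exactly on $\vec{A}^0(\mathcal{P})$, with [\textbf{ii}] killing the ambiguity sets. That part is fine.

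The second half, however, has genuine gaps. First, your cusp argument lives on a differentiable manifold, not on the compact metric measure space of the paper: there is no notion of ``a curve into $A^0_{k^*}$ with $s\approx-\beta t$,'' no Taylor expansion $L=C+\alpha t+O(t^2)$ (continuity of $\vec{\zeta}$ gives no rate), and no guarantee that the interface $\{s=0\}$ is nonempty, carries points in the supports of both cells, or is approached \emph{transversally}; if $|\vec{\zeta}(x)-\vec{\zeta}(x_0)|\sim\sqrt{|s(x)|}$ along the only available approach, the comparison $\vec{\lambda}\cdot\vec{\zeta}\lessgtr C-\sqrt{|s|}$ yields no contradiction. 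Moreover the equilibrium identity $A_i(\mathcal{P}')=A^0_i$ holds only up to $\mu$-null sets, so the pointwise inequalities you differentiate are only a.e.\ statements. Second, and decisively, you concede that verifying [\textbf{iii}] for $c_{i^*}=C-\sqrt{|s|}$ needs ``a gentle strengthening of [\textbf{ii}]''; the theorem does not permit that. The level set $\{(C-\vec{\lambda}\cdot\vec{\zeta})^2=|(\vec{p}_{i^*}-\vec{p}_{k^*})\cdot\vec{\zeta}|\}$ is a quadric in $\vec{\zeta}$, and [\textbf{ii}] only controls hyperplanes through the origin, so ``codimension one in $\vec{\zeta}$-space'' does not translate into $\mu$-measure zero (the pushforward $\vec{\zeta}_\#\mu$ could charge that quadric). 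The paper avoids all of this with a purely finite-dimensional obstruction: pick $q+1$ points $x_1,\dots,x_{q+1}$ in some $A^0_i\cap A^0_k$, note that the vectors $\bigl(\vec{t}\cdot\vec{\zeta}(x_1),\dots,\vec{t}\cdot\vec{\zeta}(x_{q+1})\bigr)$ span a subspace $V$ of dimension at most $q$ in $\mathbb{R}^{q+1}$, and prescribe $\bigl(c_i(x_l)-c_k(x_l)\bigr)_l=\vec{w}\notin V$; then no price matrix $\mathcal{P}'$ can make all $q+1$ points indifferent between $i$ and $k$, contradicting $A_i(\mathcal{P}')\cap A_k(\mathcal{P}')\supset\{x_1,\dots,x_{q+1}\}$. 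Since this is a finite interpolation condition, $\vec{c}$ retains enough freedom to satisfy [\textbf{iii}] generically. You would need either to import the paper's linear-algebra mechanism or to add hypotheses the theorem does not grant; as written, the second claim is not proved.
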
 
\begin{proof}
	%We first need the following
%\begin{lemma}\label{th2}
	Under condition (\ref{ass1}), if $B_1, \ldots B_n$ is a sub-partition corresponding to  ${\cal M}({\cal P})$, then \\
	$\mu(B_i\Delta A_i^0({\cal P}))=0$.
%\end{lemma}
%\begin{proof}
Indeed, 
	assume  $B_1, \ldots B_n$ is a sub-partition satisfying $\mu^j(B_i)=\mu^j(A^0_i({\cal P}))$. Then, by definition
	$$ 0=\int_X\left[\max_{i\in\{1\ldots n\}}\vp_i\cdot\vzeta(x)\right]_+\mu(dx)-
	\sum_{i=1}^n\vp_i\cdot\vm_i$$
	where $\vm_i:= (m_i^1({\cal P}), \ldots m_i^q({\cal P}))\in \R^q$. On the other hand
	$$\int_X\left[\max_{i\in\{1\ldots n\}}\vp_i\cdot\vzeta(x)\right]_+\mu(dx)\geq \sum_{i=1}^n \int_{B_i}\vp_i\cdot\zeta(x) \mu(dx)+ \int_{B_0}\left[\max_{i\in\{1\ldots n\}}\vp_i\cdot\vzeta(x)\right]_+\mu(dx)$$
	where $B_0:= X-\cup_i^n B_i$. Since $\mu^j(B_i)=m_i^j$ by assumption we get
	$$ \sum_{i=1}^n \int_{B_i}\vp_i\cdot\zeta(x) \mu(dx)= 	\sum_{i=1}^n\vp_i\cdot\vm_i \ . $$
	It follows that 
	$$\int_{B_0}\left[\max_{i\in\{1\ldots n\}}\vp_i\cdot\vzeta(x)\right]_+\mu(dx)\leq 0 $$
	and all inequalities above are, in fact, equalities.  In  particular, the last equality implies that $\vp_i\cdot\vzeta\leq 0$ $\mu-$a.e on $B_0$, so $B_0\cap(\cup_1^nA_i^0({\cal P}))=\emptyset$. 
	Moreover
	$$ \int_{B_i}\left(\left[\max_{k\in\{1\ldots n\}}\vp_k\cdot\vzeta(x)\right]_+-\vp_i\cdot\vzeta(x)\right)\mu(dx)=0$$
	so $\left[\max_{k\in\{1\ldots n\}}\vp_k\cdot\vzeta(x)\right]_+=\vp_i\cdot\vzeta(x)$ $\mu$-a.e on $B_i$ for $i>0$. This implies that $B_i$ is essentially contained in $A^0_i({\cal P})$
	i.e $\mu(B_i\ - A_i^0({\cal P}))=0$).
	Since  $\cup_{i=0}^n B_i=X$ by definition, we obtain the first  claim. 
	
  As for the second claim, 
 we introduce an example of a cost function $\vec{c}\in C(X, \R^n)$ satisfying {\bf iii}  for which an equilibrium price does not exist for ${\cal M}={\cal M}({\cal P})$ where ${\cal P}$ satisfies (\ref{ass1}). Since, by the first claim, $\vec{A}^0({\cal P})$ is  the unique sub-partition (up to $\mu$-negligible sets),  any equilibrium price ${\cal P}^{'}$ for ${\cal M}({\cal P})$ must satisfy
\be\label{A-=A-0} A_i^0({\cal P})=A_i({\cal P}^{'}) \ \ , i=1\ldots n\ee
up to $\mu-$negligible set, where $\vec{A}({\cal P}^{'})$ given by (\ref{Ap}) for ${\cal P}^{'}$ substituted for ${\cal P}$. By definition 
$$A_i^0({\cal P})\cap A_k^0({\cal P})
\subset \{x\in X; (\vp_i-\vp_k)\cdot\vzeta(x)=0\} \ . $$
Let such $i,k$ for which $A_i^0({\cal P})\cap A_k^0({\cal P})$ contains  $q+1$ distinct  points $x_1, \ldots x_{q+1}$. Let $V\subset \R^{q+1}$ 
be the subspace spanned by $\{(\vec{t}\cdot\vzeta(x_1), \ldots \vec{t}\cdot\vzeta(x_{p+1)})) \ ; \vec{t}\in \R^q\}$. Since $dim(V)\leq q$ it follows that there exists a vector $\vec{w}\in\R^{q+1}$, $\vec{w}\not\in V$. 
 We now choose the components $c_k, c_i$ of $\vec{c}$ such that $\left((c_i(x_1)-c_k(x_1)),\ldots (c_i(x_{q+1})-c_k(x_{q+1})\right)=\vec{w}$.
 
  It follows that for  the row vectors $\vec{p^{'}}_i$, $\vec{p^{'}}_k$ in ${\cal P}^{'}$ there exists $x_l$, $1\leq l\leq q+1$, 
  % in particular $x_l\in A_i^0({\cal P})\cap A_k^0({\cal P})$, 
   for which 
  $$ \vp^{'}_i\cdot\vzeta(x_l)+ c_i(x_l)\not= 
   \vp^{'}_k\cdot\vzeta(x_l)+ c_k(x_l)$$
   i.e. $x_l\not\in A_i({\cal P}^{'})\cap A_k({\cal P}^{'})$. This contradicts (\ref{A-=A-0}). 
   \end{proof}

\section{Generalization to multi-layer transport}
The definition of $\cM(n,\vmu)$ leads to a family of partial orders on the set of $\R^q$ valued measures on measure spaces. 
\begin{defi}
	Consider the set of $\R^q$ valued measure spaces ${\cal X}$. Given $n\in\mathbb{N}$, for $(X,\vmu), (Y,\vnu)\in{\cal X}$ define
	$$ (X,\vmu)\succ_n(Y,\vnu) \ \text{ iff}\ \ 
	\cM(n,\vmu) \supset \cM(n,\vnu) \ . $$
	Equivalently, for any $n-$sub-partition $\vec{B}$ of $Y$ there exists an $n-$sub-partition  $\vec{A}$ of $X$ such that
	$\nu^j(B_i)=\mu^j(A_i)$ for $i\in\{1\ldots n\}$ and $j\in\{1\ldots q\}$. 
	\end{defi}
It can easily be verified that $\succ_n$ is a partial order relation for each $n$. In particular, a necessary condition for  $(X,\vmu)\succ_n(Y,\vnu) $ 
is, by (\ref{nescond})
$\vmu(X)\geq \vnu(Y)$
coordinatewise (namely $\mu^j(X)\geq \nu^j(Y)$ for $j=1\ldots q$). 

In addition, this partial order  is preserved under weak* convergence:
$$ \lim_{k\rightarrow\infty}(\vmu_k,X)=(\vmu,X) \Longleftrightarrow \lim_{k\rightarrow\infty}\int\vpsi\cdot d\vmu_k
=\int\vpsi\cdot d\vmu$$
for any $\psi\in C(X; \R^q)$. 
\par\noindent  
{\bf Proposition} \cite{wolshlomi}: {\it 
	If $(\vmu_k,X)\succ_n(\vnu_k,Y)$ for any $k\in\mathbb{N}$ and 
	$\lim_{k\rightarrow\infty}(\vmu_k,X)=(\vmu,X), \lim_{k\rightarrow\infty}(\vnu_k,Y)=(\vnu,Y)$ then 
	$$ (X,\vmu)\succ_n(Y,\vnu) \ . $$
	}

Note also that $\succ_{n}\supset\succ_{n+1}$. As a result we can define the partial order  \be\label{prealln}\succ:=\cap_{n=1}^\infty\succ_n\ee
which is preserved under weak* convergence as well. This object is in some context  related to {\it convex} or {\it stochastic} order, as well as to {\it dominance}  and {\it majorization} of probability distributions (c.f. \cite{lamport94, bian1, bian2,joe1, joe2}). 
\par\noindent 
{\bf Theorem}\ \cite{wolshlomi}: \ {\it 
	The following are equivalent:\begin{itemize}
		\item $(\vmu,X)\succ(Y,\vnu)$.
		\item There exists a measurable kernel $P_x(dy)$ such that $\int_YP_x(dy)= 1$ $\mu-$as in $x$ and
		$$\int_X P_x(dy)\mu^j(dx)=\nu^j(dy) \ ,  \ j=1\ldots q \ . $$
		\item 
		For any non-negative convex function $f$ on $\R^q_+$
		$$ \int_X f\left(\frac{d\vmu}{d\mu}\right)d\mu\geq \int_Y f\left(\frac{d\vnu}{d\nu}\right)d\nu$$
		\item For any $k\in\mathbb{N}$ and any $k-$ sub-partition $B_1, \ldots B_k$ of $Y$ there is a $k-$sub-partition $A_1\ldots A_k$  of $X$, such that $\int_{A_i}d\mu^j=\int_{B_i}d\nu^j$ for $i=1\ldots k$ and $j=1\ldots q$. 
	\end{itemize}
}

The generalization of the Kantorovich problem for q-layer case is as follows:  Given a  pair of $\R^q-$ valued measures $(X,\vmu)\succ(Y,\vnu)$, minimize (\ref{Kanto}) on $\pi\in\Pi(\vmu,\vnu)$ where
$$ \Pi(\vmu,\vnu):=\left\{ \pi\in K(\mu,\nu)  \ , \ 
\int_X\frac{d\vmu}{d\mu}(dx)\pi(dxdy)= \vnu(dy) \  \right\} \ .  $$
The corresponding duality theorem is given in \cite{wolshlomi}:\par\noindent
{\bf Theorem} {\it 
	$$ \min_{\pi\in \Pi(\vmu, \vnu)}\int_{X\times Y} c(x,y)\pi(dxdy)=\sup_{(\vphi, \vpsi)\in J^q(c)}\int_X\vphi\cdot d\vmu+ \int_Y \vpsi\cdot d\vnu$$ where 
$$ J^q(c):= \left\{ \vphi\in C^q(X), \vpsi\in C^q(Y); \frac{d\vmu}{d\mu}\cdot \left(\vpsi(x)+\vphi(y)\right)\leq c(x,y)\ \ \forall (x,y)\in X\times Y  \right\} \ . $$
}
The manuscript \cite{wolshlomi} also extend the  the partial order (\ref{prealln}) and the  duality Theorem to Banach-valued measures.

In view of Theorem \ref{noexist}, the existence of a maximizer to the dual problem is not guaranteed, in general. The following question is still open:
\par\noindent
{\bf Question}:\ {\it  What is the conditions on $c=c(x,y)$, $(\vmu,X)$, $(\vnu,Y)$ under which a maximizer to the dual problem exists in the Banach (or even the finite dimensional vector) valued  case?}

 There is a  relevant result in this direction:
\par\noindent 
{\bf Theorem} \  \cite{wolar}: {\it 
 Consider  the semi-discrete case. If ${\cal M}$ is a relative interior point of $\cM(n,\vmu)$ then there is a maximizer of the dual problem in the finite dimensional vector valued case:
$$ \sup_{\cal P} Tr({\cal P}{\cal M}) - \int_X\phi(x;{\cal P})d\mu \ .  $$ 
If assumption {\bf i-iii} are satisfied than the maximum is unique. }

\end{document}